\theoremstyle{plain}
\newtheorem{theorem}{Theorem}[section]
\newtheorem{lemma}[theorem]{Lemma}
\newtheorem{a.1}[theorem]{A.1}
\theoremstyle{definition}
\theoremstyle{remark}
\begin{document}

	\title[Characterization of finite groups of prime exponent]{A combinatorial characterization of finite groups of prime exponent}
	
	\author[Ramesh Prasad Panda]{Ramesh Prasad Panda}

	\address{School of Mathematical Sciences\\ National Institute of Science Education\\ and Research Bhubaneswar (HBNI)\\
		P.O. Jatni, District - Khurda\\ Odisha - 752050, India}

	\email{rppanda@niser.ac.in}

\begin{abstract}
	The power graph of a group $G$ is a simple and undirected graph with vertex set $G$ and two distinct vertices are adjacent if one is a power of the other. In this article, we characterize (non-cyclic) finite groups of prime exponent and finite elementary abelian $2$-groups (of rank at least $2$) in terms of their power graphs. 
\end{abstract}

	\subjclass[2010]{Primary 20D60, Secondary 05C25}
	\keywords{Group of prime exponent, elementary abelian $p$-group, maximal cyclic subgroup, power graph of a group}
	\maketitle
	
	\section{Introduction}

Let $G$ be a group. The \emph{power graph} $\mathcal{G}(G)$ of $G$ is a simple and undirected graph whose vertex set is $G$ and distinct vertices $u$ and $v$ are adjacent if $v=u^n$ for some $n \in \mathbb{N}$ or $u=v^m$ for some $m \in \mathbb{N}$. The notion of (directed) power graph of a group was introduced by Kelarev and Quinn \cite{kelarev2000combinatorial, kelarevDirectedSemigr}. Afterwards, Chakrabarty et al. \cite{GhoshSensemigroups} gave the above definition of power graph of a group. Recently, many interesting results on power graphs have been obtained, for instance, see \cite{curtin2014edge,feng2015,ma2018,moghaddamfar2013} and the references therein.

In the study of graphs constructed from groups, a useful topic is the extent to which structure of a group is reflected by the corresponding graph. By considering it for power graphs, Cameron \cite{Cameron} proved that if two finite groups $G_1$ and $G_2$ have isomorphic power graphs, they have same number of elements of each order (in particular, have the same spectra). Additionally, if $G_1$ and $G_2$ are abelian, it was shown by Cameron and Ghosh \cite{Ghosh} that $G_1$ and $G_2$ are isomorphic. Mirzargar et al. \cite{mirzargar2012power} proved that finite simple groups and finite cyclic groups are uniquely determined by their power graphs. In this article, we characterize non-cyclic finite groups of prime exponent (cf. Theorem \ref{min.pg.all2}) and finite elementary abelian $2$-groups of rank at least $2$ (cf. Theorem \ref{min.vercon}) in terms of connectedness of their power graphs.

The terminology used throughout are standard; for instance, we can refer to \cite{Dummit} for groups and \cite{west1996graph} for graphs.

For any graph $\Gamma$, its (vertex) connectivity, edge-connectivity and minimum degree are denoted by $\kappa(\Gamma)$, $\kappa'(\Gamma)$ and $\delta(\Gamma)$, respectively. Let $\Gamma$ be a non-trivial and connected graph. Then it is said to be \emph{minimally connected} if $\kappa(\Gamma-\varepsilon) = \kappa(\Gamma)-1$ for every edge $\varepsilon$ of $\Gamma$. Analogously, $\Gamma$ is said to be \emph{minimally edge-connected} if  $\kappa'(\Gamma-\varepsilon)= \kappa'(\Gamma)-1$ for every edge $\varepsilon$ of $\Gamma$. 
These graphs have been explored in various problems in extremal and structural graph theory (cf. \cite{bollobas2004graph, kriesell2013minimal}).
 We employ these notions on power graphs to study the corresponding finite groups. 

Now we state our main results. 	 
\begin{theorem}\label{min.pg.all2}
	A finite group $G$ is a non-cyclic group of prime exponent if and only if  $\mathcal{G}(G)$ is non-complete and minimally edge-connected.
\end{theorem}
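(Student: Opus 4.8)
The plan is to prove each implication by translating "minimally edge-connected" into local structure around the identity and around generators of maximal cyclic subgroups, using the characterization that a connected graph is minimally edge-connected precisely when every edge lies in a minimum edge cut.

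For the forward implication, suppose $G$ is non-cyclic of prime exponent $p$. Then every non-identity element has order $p$, so the non-identity vertices split into the $(p-1)$-element sets $H\setminus\{e\}$, one for each subgroup $H$ of order $p$, and two non-identity vertices are adjacent precisely when they lie in the same such $H$. Since $e$ is adjacent to every other vertex, $\mathcal{G}(G)$ is the union of $k=(|G|-1)/(p-1)\ge p+1$ copies of $K_p$ glued along the common vertex $e$; in particular it is non-complete. To compute $\kappa'$ I would take any partition $V=A\sqcup B$ with $e\in B$ and, writing $t_H=|A\cap(H\setminus\{e\})|$, count the crossing edges as $\sum_H t_H(p-t_H)$; each nonzero summand is at least $p-1$, so $\kappa'=p-1$, the minimum being attained exactly by isolating a single non-identity vertex ($t_H=1$ for one $H$) or a whole block ($t_H=p-1$ for one $H$). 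Every edge lies in one of these two minimum cuts, so every edge belongs to a minimum edge cut, which is exactly the assertion that $\mathcal{G}(G)$ is minimally edge-connected.

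For the converse, I would use that a connected graph is minimally edge-connected if and only if every edge $xy$ lies in a minimum edge cut, equivalently (by Menger's theorem) the local edge-connectivity $\lambda(x,y)$, the maximum number of edge-disjoint $x$--$y$ paths, equals $\kappa'$ for every edge $xy$. Now I would exploit that $e$ is a universal vertex: for any $x\ne e$, the edge $xe$ together with the length-two paths through the neighbours $z\in N(x)\setminus\{e\}$ (which exist because $e$ is universal) are $\deg(x)$ edge-disjoint $x$--$e$ paths, whence $\lambda(x,e)=\deg(x)$. Minimal edge-connectivity therefore forces $\deg(x)=\kappa'$ for every non-identity vertex, so all non-identity vertices have the common degree $\delta=\kappa'$. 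Applied to a generator $g$ of a maximal cyclic subgroup $M$, whose neighbourhood is exactly $M\setminus\{g\}$, this yields $|M|-1=\kappa'$; hence every maximal cyclic subgroup has the same order $m:=\kappa'+1$. This already excludes cyclic $G$: a non-complete cyclic group has a generator of degree $|G|-1$ and a non-generator of strictly smaller degree, so its non-identity degrees are not constant.

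It remains to prove $m$ is prime, after which primality of $m$ together with the fact that every element lies in a cyclic subgroup of order $m$ forces every non-identity element to have order $m=p$, and non-completeness forces $|G|>p$, so $G$ is non-cyclic of prime exponent. This final step is where I expect the real work to lie. I would again use the constant-degree condition element by element: for $x$ of prime order $\ell\mid m$ one finds $\#\{y:x\in\langle y\rangle\}=m-1$, and isolating the contribution of a single maximal cyclic subgroup containing $x$ pins down $\phi(m)$ in terms of $\ell$. When $m=\ell^{a}$ is a prime power this contribution already exhausts the count $m-1$, forcing the maximal cyclic subgroups to intersect pairwise trivially; then, choosing a central element $z$ of order $\ell$ (available since $G$ is an $\ell$-group) and a generator $g'$ of another maximal cyclic subgroup, the two distinct order-$m$ subgroups $\langle g'\rangle$ and $\langle g'z\rangle$ share the non-identity element ${g'}^{\ell}$ whenever $a\ge2$, contradicting the trivial-intersection property; hence $a=1$. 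The genuinely delicate case is when $m$ has at least two prime divisors, where the constant-degree condition does not produce such a partition; I expect this to be the main obstacle, and I would attack it by a global double count of the pairs $(x,y)$ with $x$ of prime order and $x\in\langle y\rangle$, contradicting the equalities $\#\{y:x\in\langle y\rangle\}=m-1$ forced above.
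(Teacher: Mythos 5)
Your forward direction is correct and essentially the paper's, with the edge-cut computation made explicit. In the converse, your central idea is both correct and genuinely different from the paper's: since $e$ is a universal vertex, Menger's theorem gives $\lambda(x,e)=\deg(x)$ for every $x\neq e$, so minimal edge-connectivity forces $\deg(x)=\kappa'(\mathcal{G}(G))$ for \emph{every} non-identity vertex. The paper's Lemma~\ref{min.necessary} instead deletes an edge joining two generators of $\langle x\rangle$ and therefore only reaches elements of order greater than $2$; that restriction is what drives the paper into the hypothesis of Lemma~\ref{no.maximal} and, worse, into the long block-by-block analysis of the $2$-group case via Lemma~\ref{lemma3}. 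Your argument would eliminate that case distinction entirely, which is a real simplification. Your exclusion of cyclic $G$ is fine, and your prime-power case is sound: constancy of degree forces the maximal cyclic subgroups to intersect pairwise trivially, and a central element $z$ of order $\ell$ yields the distinct maximal cyclic subgroups $\langle g'\rangle$ and $\langle g'z\rangle$ sharing $g'^{\ell}\neq e$ when $a\ge 2$.

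The genuine gap is the case you yourself defer: $m:=\kappa'+1$ divisible by at least two primes. This is not a loose end; it is exactly where the paper's key counting lemma does its work. In Lemma~\ref{no.maximal} (modelled on the proof of Lemma~\ref{p.exponent}) one takes $x=g^{p_1}$ of order $m/p_1$ inside a maximal cyclic subgroup $\langle g\rangle$, where $p_1<p_2<\cdots<p_r$ are the prime factors of $m$; since $p_1$ is prime, any $y$ with $\langle y\rangle\supsetneq\langle x\rangle$ has order $m$ and so generates a maximal cyclic subgroup, whence $\deg(x)=m/p_1-1+c_x\phi(m)$ with $c_x\ge 1$ the number of maximal cyclic subgroups containing $x$. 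Equating this to $m-1$ gives $c_x(p_2-1)\cdots(p_r-1)=p_2\cdots p_r$, which is impossible for $r\ge 2$ because a prime divisor of $p_2-1$ would have to equal some $p_i$ with $i\ge 2$ while being smaller than $p_2$. Your proposed substitute, a \emph{global} double count of pairs $(x,y)$ with $x$ of prime order and $x\in\langle y\rangle$, is not carried out, and as stated it is doubtful: summing your identity $\#\{y:x\in\langle y\rangle\}=m-1$ over all prime-order $x$ produces an equation involving the full order distribution of $G$, which you do not control, whereas the contradiction lives at the level of a single element. A \emph{local} refinement of your count can be made to work (a single maximal cyclic subgroup containing a fixed $x$ of prime order $\ell$ accounts for only $m-n$ of the required $m-1$ elements $y$, where $n$ is the $\ell'$-part of $m$; so $n>1$ forces $x$ into two maximal cyclic subgroups, and inclusion--exclusion over these two overshoots $m-1$ after some case analysis), but no such argument appears in your proposal. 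As written, the converse is incomplete precisely at this step.
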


\begin{theorem}\label{min.vercon}
	A finite group $G$ is an elementary abelian $2$-group of rank at least $2$ if and only if  $\mathcal{G}(G)$ is non-complete and minimally connected.
\end{theorem}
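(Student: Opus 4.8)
The plan is to prove both implications by first identifying $\mathcal{G}(G)$ explicitly when $G$ is elementary abelian. For the forward direction, suppose $G\cong(\mathbb{Z}/2\mathbb{Z})^n$ with $n\geq 2$. Every non-identity element has order $2$, so $\langle x\rangle=\{e,x\}$ whenever $x\neq e$; recalling that $u\sim v$ in $\mathcal{G}(G)$ exactly when $\langle u\rangle\subseteq\langle v\rangle$ or $\langle v\rangle\subseteq\langle u\rangle$, I would check that $e$ is adjacent to every other vertex while no two distinct non-identity elements are adjacent. Hence $\mathcal{G}(G)$ is the star $K_{1,\,2^n-1}$ with $2^n-1\geq 3$ leaves. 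A star on at least three vertices is non-complete and has $\kappa=1$, and deleting any (necessarily centre-to-leaf) edge isolates a leaf, dropping connectivity to $0=\kappa-1$; so $\mathcal{G}(G)$ is non-complete and minimally connected.

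For the converse, I would isolate the combinatorial mechanism as a lemma: \emph{if $\Gamma$ is a connected non-complete graph and $u,v$ are distinct adjacent vertices with $N[u]=N[v]$ and at least one common neighbour, then $\kappa(\Gamma-uv)=\kappa(\Gamma)$}, so $uv$ witnesses that $\Gamma$ is not minimally connected. Granting this, the group-theoretic reduction is short. Assume $\mathcal{G}(G)$ is non-complete and minimally connected, and suppose toward a contradiction that $G$ has an element $x$ of order $n>2$. Since $\phi(n)\geq 2$, the cyclic group $\langle x\rangle$ has two distinct generators $g_1,g_2$; because $\langle g_1\rangle=\langle g_2\rangle=\langle x\rangle$, their neighbourhoods in $\mathcal{G}(G)$ coincide, so $g_1,g_2$ are adjacent twins, and $e\in\langle x\rangle$ is a common neighbour. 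The lemma then makes $g_1g_2$ a non-reducing edge, contradicting minimal connectivity. Therefore every non-identity element of $G$ has order $2$, which forces $G$ to be an elementary abelian $2$-group, and non-completeness rules out $G\cong\mathbb{Z}/2\mathbb{Z}$, leaving rank at least $2$.

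The main obstacle is the twin lemma, whose proof I would carry out as a vertex-cut argument. Write $k=\kappa(\Gamma)$; since edge deletion lowers connectivity by at most one, it suffices to rule out $\kappa(\Gamma-uv)=k-1$. If this held, a minimum cut $S$ of $\Gamma-uv$ would satisfy $|S|=k-1$, so $\Gamma-S$ is connected while $(\Gamma-S)-uv$ is disconnected; hence $uv$ is a bridge of $\Gamma-S$ separating $u$ from $v$, forcing $u,v\notin S$. Any common neighbour $w$ of $u,v$ with $w\notin S$ would give a path $u\!-\!w\!-\!v$ avoiding the deleted edge, contradicting the separation, so every common neighbour lies in $S$. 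As $u,v$ are twins, all neighbours of $u$ other than $v$ are common neighbours, so after deleting $S$ the set $\{u,v\}$ is an entire component of $\Gamma-S$; connectedness of $\Gamma-S$ then forces $V(\Gamma)=S\cup\{u,v\}$, i.e. $|V(\Gamma)|=k+1$, whence a $k$-connected graph on $k+1$ vertices is $K_{k+1}$, contradicting non-completeness. I expect the careful handling of this degenerate complete-graph case to be the only delicate point; once the lemma is in place, the remaining steps are routine.
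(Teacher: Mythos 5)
Your proposal is correct, and the hard direction runs on a genuinely different engine than the paper's. Both arguments share the same top-level skeleton: the easy direction identifies $\mathcal{G}(G)$ as a star $K_{1,2^n-1}$ exactly as the paper does, and the converse proceeds by contradiction from an element $x$ with $\mathrm{o}(x)>2$, focusing on the edge between two distinct generators of $\langle x\rangle$. The difference is in how the contradiction is extracted. The paper takes a minimum separating set $S$ of $\mathcal{G}(G)-\varepsilon$, shows the endpoints avoid $S$ (its Lemma \ref{min.sepset.endpts}), manufactures $S\cup\{x\}$ as a \emph{minimum} separating set of $\mathcal{G}(G)$, and then invokes Lemma \ref{lemma1}(iii) --- a power-graph-specific structure theorem imported from the literature, asserting that minimum separating sets are unions of generator-classes $[x_i]$ --- to force $y\in S\cup\{x\}$ and obtain the contradiction. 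You instead prove a purely graph-theoretic lemma: in a connected non-complete graph, deleting the edge between adjacent vertices with equal closed neighbourhoods (true twins) cannot lower $\kappa$. Your vertex-cut proof of this lemma is sound, including the endgame where $\{u,v\}$ would be a full component, forcing $|V(\Gamma)|=\kappa+1$ and hence completeness; it also degenerates gracefully to the case $S=\emptyset$. (Your hypothesis of a common neighbour is actually redundant: adjacent twins in a connected non-complete graph automatically have one, since otherwise $\Gamma=K_2$.) What each approach buys: yours is self-contained and isolates a reusable general principle (edges between true twins are never $\kappa$-critical), eliminating the dependence on the cited results of \cite{power2017conn, power2017mindeg}; the paper's route is shorter on the page because that machinery is already stated as Lemma \ref{lemma1}(iii), and its Lemma \ref{min.sepset.endpts} is lighter than your twin lemma. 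The group-theoretic reduction --- generators of the same cyclic subgroup are true twins with $e$ as common neighbour, and exponent $2$ forces an elementary abelian group --- is handled correctly in both.
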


It is trivial to see that a finite group $G$ is a cyclic group of prime exponent $p$ if and only if  $\mathcal{G}(G)$ is a complete graph on $p$ vertices. A similar statement can be made about a finite elementary abelian $2$-group of rank $1$.	

\section{Preliminaries}
\label{prelim}

Let $G$ be a group with an element $x$. The order of $x$ in $G$ and the degree of $x$ in $\mathcal{G}(G)$ are denoted by $\mathrm{o}(x)$ and $\deg(x)$, respectively. We denote by $[x]$ the set of generators of the cyclic subgroup $\langle x \rangle$. Since an element of $G$ is a vertex of $\mathcal{G}(G)$ and vice versa, we use the terms element and vertex interchangeably. Let $\Gamma$ be a graph with vertex set $V(\Gamma)$ and edge set $E(\Gamma)$. For any subgraph $\Gamma'$ and a set of edges $S$ of $\Gamma$, we denote $\Gamma' - \{S \cap E(\Gamma')\}$ simply by $\Gamma' - S$.

We next recall some necessary results on power graphs of finite groups. 

\begin{lemma}[\cite{GhoshSensemigroups,power2017conn,power2017mindeg}]\label{lemma1}
	Suppose that $G$ is a finite group.
	\begin{enumerate}[\rm(i)]
		\item  
		$\mathcal{G}(G)$ is complete if and only if $G$ is a cyclic group of order one or prime power.
		\item 
		For any induced subgraph $\Gamma$ of $\mathcal{G}(G)$ with identity vertex, $\kappa'(\Gamma)=\delta(\Gamma)$.
		\item   
		Any minimum separating set of $\mathcal{G}(G)$ is of the form $\cup_{i=1}^s [x_i]$ for $x_1,\ldots, x_s$ in $G$.
	\end{enumerate}
\end{lemma}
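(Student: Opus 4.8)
The plan is to prove the three parts in turn, exploiting throughout the single structural fact that the identity $e$ is adjacent to every other vertex of $\mathcal{G}(G)$, since $e=x^{\mathrm{o}(x)}$ for each $x\in G$; thus $e$ is a universal vertex (and in particular $\mathcal{G}(G)$ is connected). For (i) the forward direction is immediate: if $G$ is cyclic of order $1$ or $p^k$, the cyclic subgroups of $G$ form a chain under inclusion, so for any two elements one lies in the cyclic subgroup generated by the other, forcing adjacency and hence completeness. For the converse I would pick $g\in G$ of maximal order; every $h\in G$ is adjacent to $g$, so either $h\in\langle g\rangle$ or $g\in\langle h\rangle$, and maximality of $\mathrm{o}(g)$ rules out the latter unless $\langle h\rangle=\langle g\rangle$, whence $G=\langle g\rangle$ is cyclic. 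If $\mathrm{o}(g)$ had two distinct prime divisors, $G$ would contain elements of coprime orders exceeding $1$, neither a power of the other, contradicting completeness; so $\mathrm{o}(g)$ is $1$ or a prime power.

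For (ii), fix an induced subgraph $\Gamma$ containing $e$; then $e$ remains universal in $\Gamma$, and I will show that any graph with a universal vertex satisfies $\kappa'=\delta$. Since $\kappa'(\Gamma)\le\delta(\Gamma)$ always holds, it suffices to bound $\kappa'$ below. Take a minimum edge cut $[A,B]$ with $e\in A$ and write $d_A(b)$ for the number of neighbours of $b$ in $A$. Counting the cut edges by their endpoint in $B$ gives $\kappa'(\Gamma)=\sum_{b\in B}d_A(b)$, and since each $b$ has at most $|B|-1$ neighbours inside $B$ we get $d_A(b)\ge\delta(\Gamma)-|B|+1$. If $|B|\le\delta(\Gamma)$, then $\kappa'(\Gamma)\ge|B|\bigl(\delta(\Gamma)-|B|+1\bigr)\ge\delta(\Gamma)$, the last step because the concave quadratic $t\mapsto t(\delta-t+1)$ equals $\delta$ at $t=1$ and at $t=\delta$, hence is at least $\delta$ on $[1,\delta]$. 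If instead $|B|>\delta(\Gamma)$, universality of $e\in A$ forces an edge from every $b\in B$ into $A$, so $\kappa'(\Gamma)\ge|B|>\delta(\Gamma)$. Either way $\kappa'(\Gamma)\ge\delta(\Gamma)$, giving equality.

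For (iii), the key is that two elements generating the same cyclic subgroup are indistinguishable in the power graph: if $[x]=[y]$, then for any $z$ one has $z\in\langle x\rangle\Leftrightarrow z\in\langle y\rangle$ and $x\in\langle z\rangle\Leftrightarrow y\in\langle z\rangle$, so $x$ and $y$ are adjacent and share all neighbours outside $\{x,y\}$. I would then let $S$ be a minimum separating set and suppose, for contradiction, that $x\in S$ while some $y\in[x]$ lies outside $S$. In $\mathcal{G}(G)-S$ the vertex $y$ sits in a single component $C$, and because $x$ inherits all of $y$'s neighbours, every neighbour of $x$ avoiding $S$ also lies in $C$. Hence restoring $x$ attaches it only to $C$, so $S\setminus\{x\}$ still disconnects $G$, contradicting minimality. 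Therefore each class $[x]$ meets $S$ in all or nothing, and $S=\cup_{i=1}^{s}[x_i]$.

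I expect the main obstacle to be the lower bound in (ii). The twin argument of (iii) is essentially combinatorial bookkeeping, and (i) is a short order-theoretic argument, but the edge-connectivity estimate requires the case split on $|B|$ together with the quadratic inequality, and one must check that the universal vertex $e$ is genuinely used to dispose of the large-$|B|$ regime rather than the quadratic bound alone.
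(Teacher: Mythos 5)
Your proposal is correct in all three parts, but there is nothing in the paper itself to compare it against: the paper does not prove this lemma, it imports it verbatim from the references \cite{GhoshSensemigroups,power2017conn,power2017mindeg} (part (i) goes back to Chakrabarty--Ghosh--Sen, parts (ii) and (iii) to the two Panda--Krishna papers). Judged on its own, your reconstruction is sound and self-contained. In (i), the maximal-order element argument combined with the fact that the subgroups of a cyclic $p$-group form a chain is the standard route, and your handling of the converse (maximality forces $\langle h\rangle=\langle g\rangle$ when $g\in\langle h\rangle$) is correct. In (ii), the reduction to the claim that any graph with a universal vertex satisfies $\kappa'=\delta$ is exactly right, and your two-case edge count is a clean specialization of Plesn\'{i}k's theorem that every graph of diameter at most $2$ has $\kappa'=\delta$; you are also right in your closing remark that it is universality of $e$, not the quadratic bound, that disposes of the regime $|B|>\delta(\Gamma)$. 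In (iii), the twin-vertex mechanism --- elements with $\langle x\rangle=\langle y\rangle$ are adjacent and share all neighbours outside $\{x,y\}$, so a minimum separating set cannot contain part of a class $[x]$ but not all of it --- is the natural and correct argument. Two small points would make it airtight: after removing $x$ from $S$, the components of $\mathcal{G}(G)-S$ other than $C$ are untouched precisely because every neighbour of $x$ outside $S$ lies in $C$, so $S\setminus\{x\}$ still separates; and in the degenerate case where $x$ has no neighbour outside $S$ at all, $x$ becomes an isolated vertex of $\mathcal{G}(G)-(S\setminus\{x\})$, which is again disconnected, so minimality of $S$ is violated in either case.
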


We use Lemma \ref{lemma1}(i) without referring to it explicitly.

Given any group $G$, we denote by $\mathcal{G}^*(G)$, the subgraph of $\mathcal{G}(G)$ obtained by deleting the identity vertex. 
\begin{lemma}[{\cite[Theorem 4]{doostabadi2015power}}]\label{p.exponent}
	For any finite group $G$, $\mathcal{G}^*(G)$ is regular if and only if  $G$ is a cyclic group of prime power order or $G$ is of prime exponent. 
\end{lemma}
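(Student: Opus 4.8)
The plan is to prove both implications through the degrees of non-identity vertices. First I would record the elementary reduction that the identity is adjacent to every other vertex, so deleting it lowers the degree of each non-identity vertex by exactly $1$; hence $\mathcal{G}^*(G)$ is regular if and only if all non-identity vertices share a common degree $D$ in $\mathcal{G}(G)$. For the backward direction this makes both cases immediate: if $G$ is cyclic of prime power order then $\mathcal{G}(G)$ is complete by Lemma~\ref{lemma1}(i), so $\mathcal{G}^*(G)$ is a complete graph; and if $G$ has prime exponent $p$ then two non-identity elements are adjacent precisely when they generate the same subgroup of order $p$, so $\mathcal{G}^*(G)$ is a disjoint union of copies of $K_{p-1}$. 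Both are regular.

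For the forward direction, assume $\mathcal{G}^*(G)$ is regular. For a non-identity $x$ I would write $\deg(x) = (\mathrm{o}(x)-1) + |\{y : \langle x\rangle \text{ is a proper subgroup of } \langle y\rangle\}|$, the first term counting $\langle x\rangle \setminus \{x\}$ and the second the elements lying in a strictly larger cyclic subgroup. Applied to a generator of a maximal cyclic subgroup $C$ (where the second term vanishes) this gives degree $|C|-1$; regularity then forces all maximal cyclic subgroups to have a common order $M$, whence $D = M-1$ and every element order divides $M$. Grouping the $y$'s by the cyclic group they generate converts the degree condition into the key identity, valid for every non-identity $x$:
\[
\sum_{\langle x\rangle < C} \phi(|C|) = M - \mathrm{o}(x),
\]
where the sum runs over cyclic subgroups $C$ properly containing $\langle x\rangle$.

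The heart of the argument is to show $M$ is a prime power, which I would do by an integrality check. Let $p$ be the smallest prime dividing $M$ and apply the identity to an element $x$ of order $M/p$: any cyclic overgroup of $\langle x\rangle$ has order a multiple of $M/p$ dividing $M$, hence order exactly $M$, so every such overgroup is a maximal cyclic subgroup. The identity then reads $m\cdot\phi(M) = M - M/p$, where $m$ is the number of maximal cyclic subgroups through $\langle x\rangle$; thus $m = \prod_{q \mid M,\, q \neq p} q/(q-1)$ must be a positive integer. If $M$ had a second prime divisor, this fraction would have an odd numerator and an even denominator, a contradiction. Therefore $M = p^{a}$ and $G$ is a $p$-group.

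It remains to treat $M = p^{a}$ with $a \geq 2$, which I expect to be the main obstacle. Specialising the identity to an element $z$ of order $p$ and using $\sum_{j=2}^{a}\phi(p^{j}) = p^{a}-p$, together with the fact that each maximal cyclic subgroup through $z$ already supplies one cyclic subgroup of each order $p^{j}$, forces every order-$p$ element to lie in a \emph{unique} maximal cyclic subgroup. Assuming $G$ is non-cyclic, I would take a central element $z$ of order $p$, lying in its unique maximal cyclic subgroup $C_1$, and a generator $g$ of a different maximal cyclic subgroup $C_2$, and examine $gz$: since $z$ is central and $a\ge 2$, one computes $(gz)^{p^{a-1}} = g^{p^{a-1}}$, an order-$p$ element of $C_2$. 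Uniqueness then pins the maximal cyclic subgroup through $gz$ to be $C_2$, forcing $z \in C_2$ and contradicting that $z$ lies only in $C_1$. Hence $G$ is cyclic of order $p^{a}$. Combining the cases, $M = p$ yields prime exponent and $M = p^{a}$ with $a\ge 2$ yields a cyclic group of prime power order, completing the proof.
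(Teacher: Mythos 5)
Your proposal is correct, but there is an important mismatch of expectations here: the paper does not prove this lemma at all. It is imported verbatim as \cite[Theorem 4]{doostabadi2015power}, so there is no in-paper proof to compare against; what you have written is a self-contained proof of a result the paper only cites. The closest the paper comes is the proof of Lemma \ref{no.maximal}, which it describes as ``similar to that of the proof of Lemma \ref{p.exponent}'', and your integrality check is essentially that same computation: for an element $x$ of order $M/p$ (with $p$ the smallest prime factor of $M$) one gets $m\,\phi(M) = M - M/p$, hence $m(p_2-1)\cdots(p_r-1) = p_2\cdots p_r$, which is impossible when $r \geq 2$; your parity argument (odd right-hand side, even left-hand side) and the paper's divisibility argument are interchangeable ways to finish. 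The genuinely new content in your write-up, beyond anything appearing in the paper, is the treatment of the remaining case $M = p^a$ with $a \geq 2$: you saturate the identity $\sum_{\langle z\rangle < C}\phi(|C|) = p^a - p$ for an order-$p$ element $z$ to conclude that $z$ lies in a unique maximal cyclic subgroup (correct, since a second maximal cyclic subgroup would contribute an extra $\phi(p^a)$ to the sum), and then use a central order-$p$ element $z$ and a generator $g$ of a different maximal cyclic subgroup $C_2$ to derive a contradiction from non-cyclicity. I verified this step: $gz$ has order exactly $p^a$ because $(gz)^{p^{a-1}} = g^{p^{a-1}} \neq e$ (using $a \geq 2$ and centrality of $z$) while $(gz)^{p^a} = e$, so $\langle gz\rangle$ is maximal cyclic and shares the order-$p$ element $g^{p^{a-1}}$ with $C_2$; uniqueness gives $\langle gz\rangle = C_2$, whence $z = g^{-1}(gz) \in C_2$, contradicting that $z$ lies only in $C_1$. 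So your argument is sound and complete (up to the degenerate trivial group, which the lemma statement itself glosses over), and it would serve as a legitimate independent proof of the cited theorem.
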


\begin{lemma}[{\cite[Proposition 3.1]{power2017conn}}]\label{lemma3}
	Suppose that $G$ is a finite $p$-group for some prime $p$. If $x$ is an element of order $p$ in $G$, then  $x$ is adjacent to all other vertices of the component of $\mathcal{G}^*(G)$ that contains $x$.
\end{lemma}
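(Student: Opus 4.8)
The plan is to prove the slightly stronger statement that for every vertex $y$ lying in the component of $\mathcal{G}^*(G)$ that contains $x$, one has $x \in \langle y \rangle$. This suffices: if $x \in \langle y \rangle$ then $x$ is a power of $y$, so $x$ and $y$ are adjacent in $\mathcal{G}(G)$, and since neither is the identity the edge survives in $\mathcal{G}^*(G)$.

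First I would record the two facts that drive everything. Since $G$ is a finite $p$-group, every non-identity element $z$ has $\mathrm{o}(z)$ a power of $p$, so $\langle z \rangle$ is a cyclic $p$-group and in particular $p \mid \mathrm{o}(z)$. Moreover, a cyclic group of order $p^m$ possesses a unique subgroup of order $p$, and this subgroup is contained in every non-trivial subgroup. Consequently, whenever an element $x$ of order $p$ lies in some $\langle z \rangle$, the subgroup $\langle x \rangle$ is precisely that unique order-$p$ subgroup of $\langle z \rangle$.

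I would then argue by induction on the distance $d(x,y)$ from $x$ to $y$ measured within the component. The base case $d=0$, where $y=x$, is trivial. For the inductive step with $d \geq 1$, pick a neighbour $z$ of $y$ at distance $d-1$; by the inductive hypothesis $x \in \langle z \rangle$. Since $y$ and $z$ are adjacent in $\mathcal{G}^*(G)$, either $z \in \langle y \rangle$ or $y \in \langle z \rangle$. In the first case $\langle z \rangle \subseteq \langle y \rangle$, so $x \in \langle z \rangle \subseteq \langle y \rangle$ at once. In the second case $\langle y \rangle$ is a non-trivial subgroup of the cyclic $p$-group $\langle z \rangle$; by the structural fact above, $\langle y \rangle$ contains the unique order-$p$ subgroup of $\langle z \rangle$, which is $\langle x \rangle$ since $x \in \langle z \rangle$ has order $p$. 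Hence $x \in \langle y \rangle$, completing the induction.

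The main obstacle is exactly the second case of the inductive step: when one passes from $\langle z \rangle$ to the smaller cyclic subgroup $\langle y \rangle \subseteq \langle z \rangle$, it must be guaranteed that $x$ is not lost. This is where both hypotheses are indispensable—the $p$-group assumption forces $\langle z \rangle$ to be a cyclic $p$-group with a unique minimal subgroup, and the assumption $\mathrm{o}(x)=p$ identifies $\langle x \rangle$ with that minimal subgroup, which every non-trivial subgroup of $\langle z \rangle$ necessarily contains.
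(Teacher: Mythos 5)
Your proof is correct. Note that this lemma is not proved in the paper at all---it is quoted from Proposition 3.1 of the cited reference of Panda and Krishna on connectedness of power graphs---so there is no in-paper argument to compare against. Your induction on the distance from $x$, driven by the two facts that every non-identity element of a finite $p$-group generates a cyclic $p$-group and that a cyclic $p$-group has a unique subgroup of order $p$ contained in every non-trivial subgroup, is a complete and correct self-contained proof, and it is essentially the standard argument for this result (the cited source runs the same idea along a path in the component rather than by induction on distance).
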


\section{Proofs of the main results}
\label{proof.main.results}

In this section $G$ is a non-trivial group with identity element $e$. We first prove Theorem \ref{min.pg.all2} and then Theorem \ref{min.vercon}. We begin with an essential lemma.

\begin{lemma}\label{min.necessary}
	Let $G$ be a finite group such that $\mathcal{G}(G)$ is minimally edge-connected.
	\begin{enumerate}[\rm(i)]
		\item If $x \in G$ and $\mathrm{o}(x) > 2$, then $\deg(x) = \delta(\mathcal{G}(G))$.
		\item If $\langle y \rangle$ is a maximal cyclic subgroup of $G$ and $\mathrm{o}(y) > 2$,  then $\mathrm{o}(y) = \delta(\mathcal{G}(G))+1$.
	\end{enumerate}
\end{lemma}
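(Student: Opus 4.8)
The plan is to derive both parts from one fact: by Lemma \ref{lemma1}(ii) applied to $\Gamma = \mathcal{G}(G)$ itself, we have $\kappa'(\mathcal{G}(G)) = \delta(\mathcal{G}(G))$; write $\delta := \delta(\mathcal{G}(G))$. I would also record an elementary structural observation: any two generators $x, x'$ of the same cyclic subgroup $\langle x \rangle$ are \emph{true twins} in $\mathcal{G}(G)$. Indeed, since $\langle x \rangle = \langle x' \rangle$, a vertex is a power of $x$ if and only if it is a power of $x'$, and $x \in \langle z \rangle$ if and only if $x' \in \langle z \rangle$; hence $x$ and $x'$ have equal closed neighbourhoods. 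Setting $S := N(x) \setminus \{x'\} = N(x') \setminus \{x\}$, we get $N(x) = S \cup \{x'\}$ and $N(x') = S \cup \{x\}$ as disjoint unions, so $\deg(x) = \deg(x') = |S| + 1$.

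For part (i), take $x$ with $\mathrm{o}(x) > 2$, so $\phi(\mathrm{o}(x)) \geq 2$ and $\langle x \rangle$ has a generator $x' \neq x$; put $\varepsilon = xx'$, an edge of $\mathcal{G}(G)$. By minimal edge-connectivity, $\kappa'(\mathcal{G}(G) - \varepsilon) = \delta - 1$, so there is a partition $(A, \bar{A})$ of the vertex set across which exactly $\delta - 1$ edges of $\mathcal{G}(G) - \varepsilon$ run. Viewing the same partition in $\mathcal{G}(G)$, the number of crossing edges is $(\delta - 1) + [\varepsilon \text{ crosses}]$, and this is at least $\kappa'(\mathcal{G}(G)) = \delta$. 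Therefore $\varepsilon$ must cross the partition and the crossing set is a minimum edge cut of $\mathcal{G}(G)$ of size exactly $\delta$; in particular $x$ and $x'$ lie on opposite sides, say $x \in A$ and $x' \in \bar{A}$.

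Now I would count this cut using the twin structure. Splitting $S = S_A \cup S_{\bar{A}}$ according to the partition, the edges from $x$ into $\bar{A}$ number $1 + |S_{\bar{A}}|$ (the edge $\varepsilon$ together with the edges to $S_{\bar{A}}$), the edges from $x'$ into $A$ other than $\varepsilon$ number $|S_A|$, and any remaining crossing edges, which join $A \setminus \{x\}$ to $\bar{A} \setminus \{x'\}$, contribute some $c \geq 0$. Hence the cut has size $1 + |S_{\bar{A}}| + |S_A| + c = \deg(x) + c$. Equating with $\delta$ yields $\deg(x) + c = \delta$, and since $c \geq 0$ while always $\deg(x) \geq \delta$, we conclude $\deg(x) = \delta$. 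The main obstacle I anticipate is exactly this middle bookkeeping: correctly arguing that the minimum cut of $\mathcal{G}(G) - \varepsilon$ is forced to separate the twin pair, and then organizing the count so the twin structure collapses the cut to $\deg(x) + c$.

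Finally, part (ii) is a corollary of part (i). If $\langle y \rangle$ is a maximal cyclic subgroup with $\mathrm{o}(y) > 2$, then no cyclic subgroup strictly contains $\langle y \rangle$, so the neighbours of $y$ are precisely the other $\mathrm{o}(y) - 1$ elements of $\langle y \rangle$, giving $\deg(y) = \mathrm{o}(y) - 1$. Combining this with $\deg(y) = \delta$ from part (i) gives $\mathrm{o}(y) = \delta + 1$, as required.
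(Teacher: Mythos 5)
Your proof is correct, and for part (i) it takes a genuinely different route from the paper. The paper's argument is much shorter: it applies Lemma \ref{lemma1}(ii) to both $\mathcal{G}(G)$ and $\mathcal{G}(G)-\varepsilon$, so that minimal edge-connectivity translates directly into a drop of the minimum degree, $\delta(\mathcal{G}(G)-\varepsilon)=\delta(\mathcal{G}(G))-1$; since deleting an edge can lower the minimum degree only if one of its endpoints attains it, and the twin endpoints $x,x'$ have equal degree, $\deg(x)=\delta(\mathcal{G}(G))$ follows at once. You instead invoke Lemma \ref{lemma1}(ii) only for $\mathcal{G}(G)$ itself and then analyze a minimum edge cut of $\mathcal{G}(G)-\varepsilon$ by hand: showing $\varepsilon$ must cross it, and using the twin structure to collapse the cut size to $\deg(x)+c$ with $c\ge 0$, whence $\deg(x)\le\delta(\mathcal{G}(G))$. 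This costs you the cut bookkeeping, but it buys a genuine technical advantage: the paper's step implicitly uses $\kappa'=\delta$ for $\mathcal{G}(G)-\varepsilon$, which is a spanning rather than induced subgraph of $\mathcal{G}(G)$, while Lemma \ref{lemma1}(ii) is stated only for induced subgraphs containing the identity (the identity still dominates $\mathcal{G}(G)-\varepsilon$, so the conclusion does hold there, but strictly speaking this needs a remark). Your argument sidesteps that subtlety entirely. Part (ii) is identical to the paper's: maximality of $\langle y\rangle$ gives $\deg(y)=\mathrm{o}(y)-1$, and part (i) finishes the proof.
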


\begin{proof}
	(i) Note that $[x]$ has two or more elements. Let $\varepsilon$ be the edge in $\mathcal{G}(G)$ with endpoints $x$ and $x' \in [x]-\{x\}$. Then by Lemma \ref{lemma1}(ii) and the fact that $\mathcal{G}(G)$ is minimally edge-connected, we have $\delta(\mathcal{G}(G)-\varepsilon) = \delta(\mathcal{G}(G))-1$. Thus at least one endpoint of $\varepsilon$ has the minimum degree in $\mathcal{G}(G)$. Furthermore, as $\deg(x) = \deg(x')$, we get  $ \deg(x) = \delta(\mathcal{G}(G))$.

	\smallskip
	
	\noindent
	(ii) Observe that $\deg(y) = \mathrm{o}(y)-1$. Hence by (i), the proof follows.
\end{proof}

\begin{lemma}\label{no.maximal}
	Let $G$ be a finite group with no maximal subgroup of order two. If $\mathcal{G}(G)$ is minimally edge-connected, then $G$ is of prime power order.
\end{lemma}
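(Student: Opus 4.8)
The plan is to show that the two hypotheses force $\mathcal{G}^*(G)$ to be regular; once this is established, Lemma \ref{p.exponent} immediately yields that $G$ is cyclic of prime power order or of prime exponent, and in either case $G$ has prime power order, which is the assertion. First I would record the structural content of the hypothesis. Since $G$ has no maximal cyclic subgroup of order two, every maximal cyclic subgroup has order greater than $2$, so Lemma \ref{min.necessary}(ii) applies to each of them and shows they all share the common order $n := \delta(\mathcal{G}(G)) + 1$, with $n \geq 3$. In particular the order of every element of $G$ divides $n$, and by definition the minimum degree of $\mathcal{G}(G)$ is $\delta := \delta(\mathcal{G}(G)) = n-1$.

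Next I would reduce regularity of $\mathcal{G}^*(G)$ to a degree computation in $\mathcal{G}(G)$. As the identity $e$ is adjacent to every other vertex, deleting it lowers each non-identity degree by exactly $1$; hence $\mathcal{G}^*(G)$ is regular as soon as all non-identity vertices have equal degree in $\mathcal{G}(G)$. For a vertex $x$ with $\mathrm{o}(x) > 2$, Lemma \ref{min.necessary}(i) gives $\deg(x) = \delta = n-1$ directly. The only case this misses is that of an involution $t$, since Lemma \ref{min.necessary}(i) explicitly excludes order two (its proof needs $[x]$ to contain two vertices, which fails for involutions). I expect this involution case to be the main obstacle, and I would resolve it by exploiting minimal edge-connectivity rather than any direct degree formula.

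Concretely, minimal edge-connectivity means that every edge of $\mathcal{G}(G)$ lies in a minimum edge cut, whose size is $\kappa'(\mathcal{G}(G)) = \delta = n-1$ by Lemma \ref{lemma1}(ii). Applying this to the edge $\{e,t\}$, let the cut split the vertex set into sides $A \ni e$ and $B \ni t$, and let $C$ be the set of cross-edges, so $|C| = n-1$. Because $e$ is adjacent to all of $B$, we get $|B| \leq |C| = n-1$. Counting cross-edges from the $B$-side, the number of edges from $B$ to $A$ equals $\sum_{b \in B}\deg(b)$ minus twice the number of edges inside $B$; using $\deg(b) \geq \delta = n-1$ for every $b$ and the bound $\binom{|B|}{2}$ on the edges inside $B$, this gives $n-1 = |C| \geq |B|(n-1) - |B|(|B|-1) = |B|(n-|B|)$. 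Since $n \geq 3$, the inequality $|B|(n-|B|) \leq n-1$ with $1 \leq |B| \leq n-1$ forces $|B| \in \{1, n-1\}$. If $|B| = 1$ then $\deg(t) = |C| = n-1$; if $|B| = n-1$ then the displayed chain is an equality, which (as each summand is at least $n-1$) forces $\deg(b) = n-1$ for every $b \in B$, in particular for $t$.

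Thus every involution also has degree $n-1$, so every non-identity vertex of $\mathcal{G}(G)$ has degree $n-1$ and $\mathcal{G}^*(G)$ is $(n-2)$-regular; Lemma \ref{p.exponent} then completes the argument. The one genuinely delicate point is the involution case, and the device I expect to carry it is the minimum-edge-cut counting above, which turns minimal edge-connectivity into the sharp equality $|B|(n-|B|)=n-1$.
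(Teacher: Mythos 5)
Your proof is correct, but it takes a genuinely different route from the paper's at the crucial step. Both arguments open identically: Lemma \ref{min.necessary}(ii), together with the fact that every element lies in a maximal cyclic subgroup, shows all maximal cyclic subgroups have the common order $n=\delta(\mathcal{G}(G))+1\geq 3$, so the exponent of $G$ is $n$. From there, however, the paper does \emph{not} attempt to prove regularity of $\mathcal{G}^*(G)$ (the step your involution analysis supplies); instead it supposes $n$ has prime factors $p_1<\cdots<p_r$ with $r\geq 2$, takes an element $x$ of order $n/p_1>2$, computes $\deg(x)=n/p_1-1+m\phi(n)$ where $m$ is the number of maximal cyclic subgroups containing $x$, equates this with $\delta=n-1$ via Lemma \ref{min.necessary}(i), and extracts the impossible identity $m(p_2-1)\cdots(p_r-1)=p_2\cdots p_r$. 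In other words, the paper \emph{replays} the totient computation from the proof of Lemma \ref{p.exponent} rather than invoking that lemma, precisely because regularity of $\mathcal{G}^*(G)$ is not available a priori: involutions are the obstruction, and the paper's choice of $x$ sidesteps them entirely. You instead confront the involutions head-on with the minimum-edge-cut count $n-1=|C|\geq|B|(n-|B|)$, whose two extremal cases both force $\deg(t)=n-1$; your tightness analysis in the case $|B|=n-1$ (both the degree-sum bound and the internal-edge bound must be equalities) is correct, and this buys you Lemma \ref{p.exponent} as a black box, avoids all number theory, and yields the stronger intermediate fact that every non-identity vertex of $\mathcal{G}(G)$ has degree $\delta$. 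Two points you should make explicit, though neither is a gap: the standard equivalence that minimal edge-connectivity means every edge lies in a minimum edge cut deserves its one-line proof (if $\kappa'(\Gamma-\varepsilon)=\kappa'(\Gamma)-1$, adjoin $\varepsilon$ to a minimum disconnecting edge set of $\Gamma-\varepsilon$ to get a minimum edge cut of $\Gamma$ through $\varepsilon$), and the final passage from ``prime exponent'' to ``prime power order'' uses Cauchy's theorem.
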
 

\begin{proof}
	It is known that every element of $G$ is in some of its maximal cyclic subgroups. Thus it follows from Lemma \ref{min.necessary}(ii) that the exponent of $G$ is $\delta(\mathcal{G}(G))+1$. The following arguments are similar to that of the proof of Lemma \ref{p.exponent}.
	
	We denote $d = \delta(\mathcal{G}(G))$. If possible, suppose $G$ is not of prime power order.
	Let $p_1 < p_2 < \ldots < p_r$, $r \geq 2$, be the prime factors of $d+1$. Then by Lemma \ref{min.necessary}(ii), $G$ has an element $x$ of order $\frac{d+1}{p_1}$. The degree of $x$ in $\mathcal{G}(G)$ is $\frac{d+1}{p_1}-1+m\phi(d+1)$, where $m$ is the number of maximal cyclic subgroups of $G$ containing $x$ and $\phi$ is the Euler's totient function. However, since $\mathrm{o}(x) > 2$, it follows from Lemma \ref{min.necessary}(i) that $\deg(x) = d$. As a result, $\frac{d+1}{p_1}+m\phi(d+1) =  d+1$, which yields $m(p_2-1)(p_3-1)\ldots(p_r-1) = p_2p_3\ldots p_r$. This implies that if $q$ is a positive divisor of $p_2-1$, then $q=p_i$ for some $2 \leq i \leq r$, which is a contradiction. Hence $G$ is a group of prime power order.
\end{proof}

Now we provide the proof of Theorem \ref{min.pg.all2}.

\begin{proof}[Proof of Theorem \ref{min.pg.all2}]
	
	Let $\mathcal{G}(G)$ be non-complete and minimally edge-connected. If $G$ has a maximal subgroup of order two, then $\kappa'(\mathcal{G}(G)) = 1$. Thus $\mathcal{G}(G)$ is a tree. Consequently, all elements of $G$ have order two, so that $G$ is an elementary abelian $2$-group of rank at least $2$.

	Now suppose $G$ has no maximal subgroup of order two. Then by Lemma \ref{no.maximal}, $G$ is a finite $p$-group for some prime $p$. If $p > 2$, then it follows from Lemma \ref{min.necessary}(i) that $\mathcal{G}^*(G)$ is regular. 
	
	We next take $p=2$. If $\mathcal{G}(G)$ has more than one block, then the only vertex common to any two distinct blocks is $e$.  Furthermore, by Lemma \ref{lemma3}, every block of $\mathcal{G}(G)$ has exactly one vertex that has order two in $G$. 
	
	Note   that for any $x$ in $G$, all elements of $\langle x \rangle$ are vertices in the same block of $\mathcal{G}(G)$.	If possible, suppose $\langle x_1 \rangle$ and $\langle x_2 \rangle$ are distinct maximal cyclic subgroups of $G$ whose elements are vertices in the same block, say $\Gamma$, of $\mathcal{G}(G)$. Let $y$ be the vertex in $\Gamma$ having order two in $G$ and $\varepsilon$ be the edge with endpoints $e$ and $y$. Then by assumption, $\langle y \rangle$ is not a maximal subgroup in $G$. Thus $\mathcal{G}(G)-\varepsilon$ and in particular, $\Gamma-\varepsilon$ is connected. We continue to write $d =\delta(\mathcal{G}(G))$. By applying Lemma \ref{lemma3} and Lemma \ref{min.necessary}(i), we have $d \geq 3$. Suppose $S$ is a minimum disconnecting set of $\mathcal{G}(G)-\varepsilon$. In view of  Lemma \ref{lemma1}(ii), $|S| = d-1$. If $\Gamma$ is the only block of $\mathcal{G}(G)$, then $\Gamma = \mathcal{G}(G)$. We observe that the minimum degree (hence the edge-connectivity) of any block of $\mathcal{G}(G)$ is $d$. Accordingly, if $\mathcal{G}(G)$ has any block $\Gamma'$ different from $\Gamma$, then $\Gamma'-S$ is connected. Thus we deduce that $(\Gamma-\varepsilon)-S$ is disconnected. Moreover, as  $\kappa'(\Gamma-\varepsilon) = d-1$, all elements of $S$ are therefore edges in $\Gamma$.
	
	Since all elements of $\langle x_1 \rangle$ and $\langle x_2 \rangle$ are vertices in $\Gamma$, by applying Lemma \ref{min.necessary}(ii), we get $|V(\Gamma)| \geq d+2$ (in fact, $|V(\Gamma)| \geq d+3$). So considering Lemma \ref{lemma3}, $e$ is connected to $y$ by a path of length two in $(\Gamma-\varepsilon)-S$. Because $(\Gamma-\varepsilon)-S$ is disconnected, there exists a vertex $z$ in $\Gamma$, different from $e$ and $y$, that is not connected to $e$ by any path in $(\Gamma-\varepsilon)-S$. As a result, $S$ contains the edge with endpoints $e$ and $z$ as well as the edge with endpoints $y$ and $z$. Since $\deg(z) = d$ and $|S|=d-1$, there exists a path of length two between $e$ and $z$ in $(\Gamma-\varepsilon)-S$. However, this contradicts the initial assumption about $z$. We thus conclude that every block of $\mathcal{G}(G)$ is a clique induced by a maximal cyclic subgroup of $G$. Consequently, in view of Lemma \ref{min.necessary}(ii), $\mathcal{G}^*(G)$ is regular. Additionally, $\mathcal{G}(G)$ is non-complete. Hence it follows from Lemma \ref{p.exponent} that $G$ is a non-cyclic group of prime exponent.
	
	Conversely, let $G$ be a non-cyclic group of prime exponent $p$. Then two non-identity vertices $u$ and $v$ are adjacent in $\mathcal{G}(G)$ if and only if $\langle u \rangle = \langle v \rangle$. Thus $\mathcal{G}(G)$ is union of finitely many maximal cliques of size $p$ with any two distinct maximal cliques having $e$ as the only common vertex. Therefore, $\mathcal{G}(G)$ is non-complete and minimally edge-connected.
\end{proof}

We next give a simple lemma and then prove Theorem \ref{min.vercon}.

\begin{lemma}\label{min.sepset.endpts}
	Let $\Gamma$ be a graph with an edge $\varepsilon$ such that $\Gamma-\varepsilon$ is connected. If $\kappa(\Gamma - \varepsilon) = \kappa(\Gamma)-1$, then no minimum separating set of $\Gamma - \varepsilon$ contains endpoints of $\varepsilon$.
\end{lemma}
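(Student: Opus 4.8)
The plan is to argue by contradiction, exploiting the elementary fact that deleting a vertex from a graph automatically deletes every edge incident to it. Write $\varepsilon = ab$ and set $k = \kappa(\Gamma)$, so that the hypothesis reads $\kappa(\Gamma - \varepsilon) = k - 1$. Suppose some minimum separating set $S$ of $\Gamma - \varepsilon$ contains an endpoint of $\varepsilon$, say $a \in S$; by definition $|S| = k - 1$ and $(\Gamma - \varepsilon) - S$ is disconnected.

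The crux is the observation that $\Gamma - S$ and $(\Gamma - \varepsilon) - S$ are the same graph. Indeed, these two graphs can differ only in whether the edge $\varepsilon$ is present, and since $a \in S$ the edge $\varepsilon$ is incident to a deleted vertex, so it is absent from $\Gamma - S$ as well. Hence $\Gamma - S = (\Gamma - \varepsilon) - S$ is disconnected, which exhibits $S$ as a separating set of $\Gamma$ of size $k - 1$. This contradicts $\kappa(\Gamma) = k$, and the case $b \in S$ is identical by symmetry.

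There is essentially no hard step here: once one notices that removing an endpoint of $\varepsilon$ renders the edge $\varepsilon$ irrelevant, the two vertex-deleted graphs coincide and the contradiction is immediate. The only point requiring care is the bookkeeping in the equality $\Gamma - S = (\Gamma - \varepsilon) - S$, namely confirming that the sole possible discrepancy between the two graphs, the edge $\varepsilon$, is already eliminated by the presence of $a$ (or $b$) in $S$. I would also state at the outset that $S$ disconnects $\Gamma-\varepsilon$ into at least two nonempty parts, so that the resulting disconnection of $\Gamma-S$ genuinely violates the defining minimality of $\kappa(\Gamma)$.
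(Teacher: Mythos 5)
Your proof is correct and follows essentially the same route as the paper's: both hinge on the observation that $(\Gamma-\varepsilon)-S=\Gamma-S$ when $S$ contains an endpoint of $\varepsilon$, so that $S$ would separate $\Gamma$ itself, contradicting $\kappa(\Gamma-\varepsilon)=\kappa(\Gamma)-1$. Your version merely makes the cardinality bookkeeping ($|S|=k-1$ versus $\kappa(\Gamma)=k$) explicit where the paper phrases the contradiction as $\kappa(\Gamma)\leq\kappa(\Gamma-\varepsilon)$.
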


\begin{proof}
	Suppose $u$ and $v$ are the endpoints of $\varepsilon$. If possible, let $S$ be a minimum separating set of $\Gamma - \varepsilon$ containing at least one of $u$ or $v$. We have $(\Gamma - \varepsilon)-S = \Gamma-S$, so that $S$ is a separating set of $\Gamma$. This implies $\kappa(\Gamma) \leq \kappa(\Gamma - \varepsilon)$, which contradicts the given condition. This proves the lemma.
\end{proof}

\begin{proof}[Proof of Theorem \ref{min.vercon}]
	Let $\mathcal{G}(G)$ be non-complete and minimally connected. We notice that $|G| \geq 4$. Let $x$ be a non-identity element of $G$ and if possible, let there exist $y \neq x$ such that $\langle x \rangle = \langle y \rangle$. Suppose $\varepsilon$ is the edge with endpoints $x$ and $y$. Since $x$ and $y$ are connected by the path $x,e,y$, $\mathcal{G}(G)-\varepsilon$ is connected.	By using the fact that $\mathcal{G}(G)$ is minimally connected and  $\kappa(\mathcal{G}(G)) \leq |G|-2$, we have $\kappa(\mathcal{G}(G)-\varepsilon) \leq |G|-3$. Let $S$ be a minimum separating set of $\mathcal{G}(G)-\varepsilon$. Then $\mathcal{G}(G)-S$ is a connected graph with three or more vertices, and by Lemma \ref{min.sepset.endpts}, $x,y \notin S$. We thus deduce that there is no path in $\mathcal{G}(G)-S$ that has vertices $x$ and $y$, and does not have the edge $\varepsilon$. Let $z$ be a vertex different from $x$ and $y$ in $\mathcal{G}(G)-S$. Then in $\mathcal{G}(G)-S$, $z$ is connected to exactly one of $x$ and $y$ by a path that does not have $\varepsilon$. Without loss of generality, let $z$ be connected to $x$ by a path that does not have $\varepsilon$ in $\mathcal{G}(G)-S$. Then $S \cup \{x\}$ is a separating set of $\mathcal{G}(G)$. Since $\kappa(\mathcal{G}(G)-\varepsilon)=\kappa(\mathcal{G}(G))-1$, we conclude that $S \cup \{x\}$ is a minimum separating set of $\mathcal{G}(G)$. Then by Lemma \ref{lemma1}(iii), $[x] \subseteq S \cup \{x\}$, which contradicts the fact that $y \notin S$. Accordingly, we get $[x]=\{x\}$. Because $|[x]|=\phi(\mathrm{o}(x))$ and $x \neq e$, we have $\mathrm{o}(x)=2$. Hence $G$ is an elementary abelian $2$-group. Additionally, as  $\mathcal{G}(G)$ is non-complete, the rank of $G$ is at least $2$. 
	
	For converse, suppose $G$ is an elementary abelian $2$-group of rank at least $2$. Then $\mathcal{G}(G)$ is a star on three or more  vertices. As a result, $\kappa(\mathcal{G}(G))=1$ and $\mathcal{G}(G)-\varepsilon$ is disconnected for every edge $\varepsilon$ of $\mathcal{G}(G)$. Therefore we conclude that $\mathcal{G}(G)$ is non-complete and minimally connected.
\end{proof}

\section*{Acknowledgement}
The author is thankful to Dr. K. V. Krishna for his constructive suggestions. This research work was partially supported by the fellowship of IIT Guwahati, India.

\end{document}